\newtheorem{theorem}{Theorem}[section]
\newtheorem{remark}{Remark}[section]
\newcommand{\vin}{\vdash}
\title{Biases in Non-Unitary Partitions}
       \author[P. J. Mahanta]{Pankaj Jyoti Mahanta}
    \address[P. J. Mahanta]{Department  of Mathematical Sciences, Tezpur University, Napaam,  Tezpur 784028, Assam, India}
    \email{pjm2099@gmail.com}
\author[M. P. Saikia]{Manjil P. Saikia}
\address[M. P. Saikia]{Mathematical and Physical Sciences division, School of Arts and Sciences, Ahmedabad University, Ahmedabad 380009, Gujarat, India}
\email{manjil@saikia.in}
\author[A. Sarma]{Abhishek Sarma}
\address[A. Sarma]{Department  of Basic Science, Assam Skill University, Mangaldai, 784125, Assam, India}
\email{abhitezu002@gmail.com}
\date{\today}
\keywords{Combinatorial inequalities, Partitions, Parity of parts, Restricted Partitions.}
\subjclass[2020]{05A17, 11P83.}
\begin{document}

\begin{abstract}
    Recently, the concept of parity bias in integer partitions has been studied by several authors. We continue this study here, but for non-unitary partitions (namely, partitions with parts greater than $1$). We prove analogous results for these restricted partitions to those that have been obtained by Kim, Kim, and Lovejoy (2020) and Kim and Kim (2021). We also look at inequalities between two classes of partitions studied by Andrews (2019), where the parts are separated by parity (either all odd parts are smaller than all even parts or vice versa).
\end{abstract}
\maketitle

\section{Introduction}
A partition $\lambda=(\lambda_1, \lambda_2, \ldots, \lambda_k)$ of $n$ is a non-increasing sequence of natural numbers, $\lambda_1\geq \lambda_2\geq \cdots \geq \lambda_k$ such that $\lambda_1+\lambda_2+\cdots+\lambda_k=n$. Here, each $\lambda_i$ is called a part of the partition $\lambda$ of $n$ (written as $\lambda\vin n$) and the length of the partition, denoted by $\ell(\lambda)$ is $k$. Partitions have been studied since the time of Euler, and continue to be a serious topic for ongoing research in several directions. A good introduction to the subject is given in the masterly treatment of Andrews \cite{gea1}.

In the theory of partitions, inequalities arising between two classes of partitions have a long tradition of study, see for instance work in this direction by Alder \cite{alder},  Andrews \cite{gea3}, McLaughlin \cite{Mc}, Chern, Fu, and Tang \cite{Chern} and Berkovich and Uncu \cite{Berkovich}, among others. In 2020, Kim, Kim and Lovejoy \cite{kim2020parity} introduced a phenomenon in integer partitions called \textit{parity bias}, wherein the number of partitions of $n$ with more odd parts (denoted by $p_o(n)$) are more in number than the number of partitions of $n$ with more even parts (denoted by $p_e(n)$). That is, they proved for $n\neq 2$, $p_o(n)>p_e(n)$. They also conjectured a similar inequality for partitions with only distinct parts. For $n>19$, they conjectured that $d_o(n)>d_e(n)$, where $d_o(n)$ (resp. $d_e(n)$) denotes the number of partitions of $n$ with distinct parts with more odd parts (resp. even parts) than even parts (resp. odd parts). Further generalizations of the results of Kim, Kim, and Lovejoy \cite{kim2020parity} have been found by Kim and Kim \cite{zbMATH07394382} and Chern \cite{zbMATH07481756}. Most of the proofs of the results in these papers use techniques arising from $q$-series methods.

The first two authors, in collaboration with Banerjee, Bhattacharjee, and Dastidar \cite{zbMATH07524341}, proved both the above-quoted result and conjecture of Kim, Kim, and Lovejoy \cite{zbMATH07394382} using combinatorial means. In addition, they proved several more results on parity biases of partitions with restrictions on the set of parts. For a nonempty set $S \subsetneq \mathbb{Z}_{\geq 0}$, define
\[P_e^S(n):=\{\lambda \in P_e(n): \lambda_i \notin S\}\]
and
\[P_o^S(n):=\{\lambda \in P_o(n): \lambda_i \notin S\},\]
where the set $P_e(n)$ (resp. $P_o(n)$) consists of all partitions of $n$ with more even parts (resp. odd parts) than odd parts (resp. even parts). Let us denote the number of partitions of $P_e^S(n)$ (resp. $P_o^S(n)$) by $p_e^S(n)$ (resp. $p_o^S(n)$).
Banerjee \emph{et al.} \cite{zbMATH07524341} proved the following result.
\begin{theorem}[Banerjee \emph{et al.}, \cite{zbMATH07524341}]\label{thm:reverse}
For positive integers $n$, the following inequalities are true (the range is given in the brackets),
	\begin{equation}\label{eq:ineq}
	p_o^{\{1\}}(n)<p_e^{\{1\}}(n), \quad (n>7),
	\end{equation}
\begin{equation}\label{eq:22}
    p_o^{\{2\}}(n)>p_e^{\{2\}}(n), \quad (n\geq 1),
\end{equation}
and
\begin{equation}\label{eq:33}
    p^{\{1,2\}}_o(n)>p^{\{1,2\}}_e(n), \quad (n>8).
\end{equation}
\end{theorem}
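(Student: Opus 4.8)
The plan is to prove each of the three inequalities separately by exhibiting an explicit, sum-preserving injection from the \emph{smaller} class into the \emph{larger} one, and then to pin down strictness by displaying an element of the target class that lies outside the image. Throughout, for a partition $\lambda$ write $O(\lambda)$ and $E(\lambda)$ for its number of odd and even parts, so that $P_o^S(n)$ is the set with $O(\lambda)>E(\lambda)$ and $P_e^S(n)$ the set with $E(\lambda)>O(\lambda)$, both avoiding the parts in $S$. The engine in every case is a single local move that flips parity dominance while respecting the forbidden set $S$.

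For \eqref{eq:ineq} I would inject $P_o^{\{1\}}(n)$ into $P_e^{\{1\}}(n)$. Since $1$ is forbidden, every odd part is $\geq 3$, so merging two odd parts $2a+1\geq 2b+1$ into the single even part $2a+2b+2\geq 6$ keeps all parts $\geq 2$ and sends $(O,E)\mapsto(O-2,E+1)$, pushing $O-E$ down by $3$; iterating this on the largest odd parts carries an odd-dominant partition into an even-dominant one. To make the map injective one must fix a canonical rule (always act on the two largest odd parts and record where the merge occurred) so that the inverse, splitting a distinguished even part, recovers the original uniquely. The degenerate configurations with at most one odd part, which force a nearly all-even shape, must be treated by a separate local move, and this is precisely where the bound $n>7$ enters.

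For \eqref{eq:22} the roles reverse: here $2$ is forbidden but $1$ is allowed, so I would inject $P_e^{\{2\}}(n)$ into $P_o^{\{2\}}(n)$ by splitting an even part. Every even part is $\geq 4$, so replacing the smallest even part $2a$ by the two odd parts $(2a-1)+1$ is always legal and sends $(O,E)\mapsto(O+2,E-1)$, raising $O-E$ by $3$; because the created part $1$ is admissible this move is never obstructed, which is why one expects the inequality to persist for all $n\geq 1$. One splits repeatedly, smallest even part first, until the partition becomes odd-dominant, and inverts by merging a distinguished $1$ back; strictness follows by exhibiting an explicit odd-dominant partition not lying in the image.

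For \eqref{eq:33}, where both $1$ and $2$ are forbidden and all parts are $\geq 3$, I would again inject $P_e^{\{1,2\}}(n)$ into $P_o^{\{1,2\}}(n)$ by splitting an even part $2a\geq 4$ into two odd parts, but now both halves must be $\geq 3$, so one writes $2a=(2a-3)+3$, which is legal only when $2a\geq 6$. The even part $4$ is therefore the genuine obstruction, and partitions whose only even parts equal $4$ require bespoke handling; managing these exceptional shapes is what forces the threshold $n>8$. The main obstacle across all three parts is exactly this boundary behaviour: the local move is transparent in the bulk, but the forbidden small parts ($3$ when merging, $4$ when splitting) block it on a thin set of partitions, so the crux is to redefine the injection on those partitions without collisions and to discharge the remaining finitely many values of $n$ by direct enumeration. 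Verifying injectivity of the iterated move, that is, ensuring the canonical choice of which part to act on is recoverable by the inverse, is the other technical point that needs care.
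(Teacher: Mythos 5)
Your overall strategy (sum-preserving local moves that merge two odd parts into an even part, or split an even part into two admissible odd parts, iterated until parity dominance flips) is the natural combinatorial route and is essentially the route of the cited source \cite{zbMATH07524341}; it is \emph{not} the route of this paper, which offers no proof of \eqref{eq:22} or \eqref{eq:33} at all and reproves \eqref{eq:ineq} analytically (Theorem \ref{thm:mm}) by expressing both generating functions over the common factor $1/(q^3;q^2)_\infty$ via the $q$-analogue of Euler's transformation, reducing the difference to $\frac{1}{(q^3;q^2)_{\infty}}\sum_{n\geq1}\frac{q^{2n^2}}{(q^2;q^2)^2_{n}}(1-q^{n})$, and checking nonnegativity summand by summand with Sylvester's identity handling $n=1$. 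So if your plan were complete it would count as a genuinely different (and more bijective) argument than the one given here.

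As written, though, there is a genuine gap at precisely the point you flag and then defer: injectivity of the iterated move. The number of merges or splits required to flip dominance depends on the initial imbalance $|O(\lambda)-E(\lambda)|$, and that quantity is not recoverable from the image partition. Concretely, for \eqref{eq:22} each split sends $O-E$ up by $3$, so a partition with $E-O=1$ needs one split and lands with $O-E=2$, while a partition with $E-O=4$ needs two splits and also lands with $O-E=2$; nothing in the resulting partition records how many of its parts equal to $1$ were created by the map rather than already present, so distinct preimages can collide. The same problem afflicts the merge map for \eqref{eq:ineq}: the image partition does not reveal which of its even parts arose as a sum of two odd parts, nor how such a part should be split to invert. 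Your remedy, to ``record where the merge occurred,'' concedes the point --- a map into (partitions) $\times$ (bookkeeping data) is not an injection into $P_e^{\{1\}}(n)$. Designing a single-valued, partition-to-partition injection whose inverse is determined by the image alone is the entire content of the combinatorial proof in \cite{zbMATH07524341}, and it, together with the boundary cases (the obstructions at parts $3$ and $4$, and the finitely many small $n$), is exactly what remains to be done before this proposal becomes a proof.
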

\noindent All of the proofs of the above inequalities were by using combinatorial techniques. Although they do not use this term, partitions where the part $1$ does not appear are called \textit{non-unitary partitions}, and we will use this terminology in this paper.

In 2023, B. Kim and E. Kim \cite{kimdiscrete} gave two further refinements for parity biases in ordinary integer partitions. For the first refinement, they let $p(m,n)$ be the number of partitions of $n$ with the number of odd parts minus the number of even parts to be $m$. They proved the following result.
\begin{theorem}\cite[Theorem 1]{kimdiscrete}
    For a positive integer $m\geq0$, we have
    \begin{align*}
        p(m,n) \geq p(-m,n).
    \end{align*}
\end{theorem}
\noindent The second refinement is that parity bias still holds if any odd part $\geq3$ is not allowed. This is given by the following theorem.
\begin{theorem}\cite[Theorem 2]{kimdiscrete}
    Let $k$ be a positive integer. Then, for all positive integers $n$,
    \begin{align*}
        p^{\{2k+1\}}_o(n)>p^{\{2k+1\}}_e(n).
    \end{align*}
\end{theorem}
\noindent The proofs of these results involve both combinatorial and analytic techniques. In 2024, they  \cite{kimramanujan} looked at some asymptotic results related to parity biases, which we do not mention here.

In view of \eqref{eq:ineq}, it is clear that the contribution of $1$ towards parity bias is much more than that of other odd parts.

The primary goal of this paper is to use analytical techniques and prove results of the type proved by Banerjee \emph{et al.}, that is, about parity biases in partitions with certain restrictions on their allowed parts. We prove the inequality \eqref{eq:ineq} using analytical techniques, as well as prove results in a similar setup for the biases discussed in the work of Kim and Kim \cite{zbMATH07394382}. We further look at some simply derived results on biases in partitions with a restriction on the size of the minimum part as well as on parity of the number of parts of a given parity. Our techniques can also be used to prove partition inequalities of the type where the number of partitions of a certain class of partitions is more than another class. This is explored for two classes of partitions studied by Andrews \cite{andrews}, where the parts are separated by parity, where either all odd parts are smaller than all even parts or vice versa.

The paper is structured as follows: in Section \ref{sec:prelim} we collect some $q$-series identities which we will use later, in Section \ref{sec:results} we state and prove our main results, namely on biases in ordinary non-unitary partitions, in Section \ref{sec:pod} we look at inequalities on partitions with parts separated by parity. Finally, we close the paper with some concluding remarks in Section \ref{sec:conc}.

\section{Preliminaries}\label{sec:prelim}

We need some preliminaries before going into our results. We use the standard $q$-series notation
\[
(a)_{n} = (a;q)_{n} = \prod_{k=1}^{n}(1-aq^{k-1}), \quad |q|<1,
\]
and,
\[
(a_1,a_2,\ldots,a_m;q)_n:=(a_1;q)_n(a_2;q)_n\cdots (a_m;q)_n.
\]
Also, recall Heine's transformation \cite[Appendix III.1]{gasper2004basic}, which says for $|z|, |q|, |b| \leq$ 1, we have
\begin{equation}\label{heine}
    \sum_{n\geq0}\frac{(a)_{n}(b)_{n}}{(q)_{n}(c)_{n}}z^n = \frac{(b)_{\infty}(az)_{\infty}}{(c)_{\infty}(z)_\infty}\sum_{n\geq0}\frac{(z)_{n}(c/b)_{n}}{(q)_{n}(az)_{n}}b^n.
\end{equation}
By appropriately iterating Heine's transformation,  we obtain \cite[Appendix III.3]{gasper2004basic} what is sometimes called the $q$-analogue of Euler's transformation, which says that for $|z|$, $|{\frac{abz}{c}}|$ $\leq$ 1, we have
\begin{align}\label{euler}
\sum_{n\geq0}\frac{(a)_{n}(b)_{n}}{(q)_{n}(c)_{n}}z^n = \frac{(abz/c)_{\infty}}{(z)_\infty}\sum_{n\geq0}\frac{(c/a)_{n}(c/b)_{n}}{(q)_{n}(c)_{n}}(abz/c)^n.
\end{align}

The final set of auxiliary identities that we need is described below. Due to Euler \cite[p. 19]{gea1}, we know that
\begin{equation*}
  \frac{1}{(a;q)_{\infty}}=\sum_{n\geq0} \frac{a^{n}}{(q;q)_{n}}  
\end{equation*}
Therefore,
\begin{equation}\label{eq:euler-2}
   \frac{1}{(q;q^2)_{\infty}}=\sum_{n\geq0} \frac{q^{n}}{(q^2;q^2)_{n}}=\sum_{n\geq0} \frac{q^{n}}{(-q)_{n}(q)_{n}}, 
\end{equation}
and
\begin{equation}\label{eq:euler-3}
    \frac{1}{(q^2;q^2)_{\infty}}=\sum_{n\geq0} \frac{q^{2n}}{(q^2;q^2)_{n}}=\sum_{n\geq0} \frac{q^{2n}}{(-q)_{n}(q)_{n}}.
\end{equation}

Now, Substituting $c=-q, a, b \to 0, z = q$ in equation \eqref{heine} we get
\begin{equation}\label{eq:heine-1}
    \sum_{n\geq0} \frac{q^{n}}{(-q)_{n}(q)_{n}}=\frac{1}{(-q)_\infty(q)_\infty} \sum_{n\geq0} q^{\frac{n^2+n}{2}}.
\end{equation}
Again, substituting $c=-q, a, b \to 0, z = q^2$ in equation \eqref{heine} we get
\begin{equation}\label{eq:heine-2}
    \sum_{n\geq0} \frac{q^{2n}}{(-q)_{n}(q)_{n}}=\frac{1}{(-q)_\infty(q)_\infty} \sum_{n\geq0} (1-q^{n+1}) q^{\frac{n^2+n}{2}}.
\end{equation}

Finally, from \cite[p. 21, Eq. (2.2.9)]{gea1}, we also have
\begin{align}\label{eq:And98 p.21}
    \frac{1}{(q^2;q^2)_{\infty}}=\sum_{n\geq0}\frac{q^{2n^2}}{(q^2;q^2)^2_{n}}.
\end{align}

\section{Biases in Ordinary Non-Unitary Partitions}\label{sec:results}

Using analytical techniques, we will give a proof of the following result, which was proved by Banerjee \textit{et al.} \cite{zbMATH07524341} combinatorially. We modify the notation a bit and let $q_e(n)$ (resp. $q_o(n)$) be the number of non-unitary partitions of $n$ where the number of even (resp. odd) parts is more than the number of odd (resp. even) parts.

\begin{theorem}[Theorem 1.5, \cite{zbMATH07524341}]\label{thm:mm}
For all positive integers $n\geq8$, we have
\[
	q_o(n)<q_e(n).
	\]
\end{theorem}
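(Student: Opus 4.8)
The plan is to pass to generating functions and track the difference between the number of even and odd parts. A non-unitary partition decomposes uniquely into its sub-partition into even parts (each $\ge 2$) and its sub-partition into odd parts (each $\ge 3$), and these choices are independent. Since the generating function for partitions into exactly $e$ even parts is $q^{2e}/(q^2;q^2)_e$ and that for exactly $o$ odd parts $\ge 3$ is $q^{3o}/(q^2;q^2)_o$, I would first record
\[
\sum_{n\ge0}\bigl(q_e(n)-q_o(n)\bigr)q^n=\sum_{e,o\ge0}\operatorname{sgn}(e-o)\,\frac{q^{2e+3o}}{(q^2;q^2)_e\,(q^2;q^2)_o}.
\]
Swapping the names $e\leftrightarrow o$ in the terms with $o>e$ and discarding the diagonal $e=o$, this collapses to the antisymmetric identity
\[
\sum_{n\ge0}\bigl(q_e(n)-q_o(n)\bigr)q^n=\sum_{e>o\ge0}\frac{q^{2e+2o}\bigl(q^{o}-q^{e}\bigr)}{(q^2;q^2)_e\,(q^2;q^2)_o},
\]
which I would take as the analytic heart of the proof. (A check of the first several coefficients, $1,-1,2,-1,2,0,3$ at $n=2,\dots,8$, matches the partitions of those $n$ directly and already signals where the threshold will come from.)

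Reindexing by the gap $j=e-o\ge1$ turns the right-hand side into
\[
\sum_{o\ge0}\sum_{j\ge1}\frac{q^{5o+2j}\,(1-q^{j})}{(q^2;q^2)_{o+j}\,(q^2;q^2)_{o}}.
\]
Every term with even gap $j$ has nonnegative coefficients, because then $1-q^{j}$ cancels the factor $(1-q^{j})$ already present in $(q^2;q^2)_{o+j}$, leaving a monomial times a reciprocal of a product of factors $1-q^{2k}$. Hence the only possible source of negative coefficients is the odd-gap terms, and in fact all the alternation at small $n$ comes from the single term $o=0,\,j=1$, namely $q^{2}/(1+q)$. This is the structural observation I would build the positivity argument on.

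The main obstacle is then to prove that the whole series has nonnegative coefficients for every $n$ and strictly positive coefficients for $n\ge8$. My plan is to isolate the offending term, writing the series as $q^{2}/(1+q)+T(q)$ with $T(q)$ collecting all remaining $(o,j)$. Since the even-gap part of $T$ is manifestly positive, the work reduces to controlling the odd-gap contributions (the terms with $o\ge1,\,j=1$ and with $j\ge3$), whose negative parts begin only at order $q^{5o+3j}$; I expect these to be dominated, coefficientwise, by neighbouring even-gap terms of comparable order through a direct comparison of $q^{5o+2j}/\bigl((q^2;q^2)_{o+j}(q^2;q^2)_o\bigr)$ against the adjacent $j\pm1$ term. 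Once nonnegativity is secured on a tail $n\ge N$, the finitely many cases $8\le n<N$ are verified by direct computation, pinning the threshold at $8$; the values $-1,-1,0$ at $n=3,5,7$ show it cannot be lowered. I anticipate the genuinely delicate point to be exactly this coefficientwise domination of the odd-gap negatives: unlike the even-gap terms there is no termwise cancellation, so the estimate must be made across different pairs $(o,j)$, and making it uniform in $n$ is where the real analytic effort will lie.
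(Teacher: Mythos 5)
Your setup is sound and your identity
\[
\sum_{n\ge0}\bigl(q_e(n)-q_o(n)\bigr)q^n=\sum_{o\ge0}\sum_{j\ge1}\frac{q^{5o+2j}\,(1-q^{j})}{(q^2;q^2)_{o+j}\,(q^2;q^2)_{o}}
\]
is correct (your listed coefficients $1,-1,2,-1,2,0,3$ for $n=2,\dots,8$ agree with the paper's expansions of $P_e$ and $P_o$), as is the observation that the even-gap terms are manifestly nonnegative because $1-q^j$ cancels into $(q^2;q^2)_{o+j}$. But the proof stops exactly where the real work begins: the claim that the odd-gap negatives are ``dominated, coefficientwise, by neighbouring even-gap terms'' is stated as an expectation, not proved, and there is no candidate pairing, no inequality between coefficients of $q^{5o+3j}/\bigl((q^2;q^2)_{o+j}(q^2;q^2)_o\bigr)$ and anything positive, and no value of the tail threshold $N$. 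Since the negative contributions occur for \emph{every} odd $j$ and every $o$, a uniform-in-$n$ domination across infinitely many pairs $(o,j)$ is a genuine analytic problem, not a finite check. As written, this is a plan with the central step missing.

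The paper closes precisely this hole by a different device: it applies the $q$-analogue of Euler's transformation \eqref{euler} to each of $P_o$ and $P_e$, which pulls out a prefactor $1/(q^3;q^2)_\infty$ and yields
\[
P_{e}(q) - P_{o}(q)= \frac{1}{(q^3;q^2)_{\infty}}\sum_{n\geq1}\frac{q^{2n^2}}{(q^2;q^2)^2_{n}}\,(1-q^{n}).
\]
Here the troublesome factor $1-q^n$ cancels for \emph{every} $n\ge 2$: against $(q^2;q^2)_n$ when $n$ is even, and against the new prefactor $(q^3;q^2)_\infty$ when $n$ is odd. Only the single summand $n=1$ survives as a possible source of negativity, and that one term is handled explicitly via Sylvester's identity \eqref{sylvester}. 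In other words, the transformation converts your infinite family of odd-gap problems into one problem. If you want to salvage your route, you would either need to prove the domination estimate you describe (which I suspect is harder than the theorem), or apply a Heine/Euler-type transformation to your double sum to manufacture the $(q^3;q^2)_\infty$ denominator that makes positivity termwise visible.
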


Let $p_{j,k,m}(n)$ be the number of partitions of $n$ such that there are more parts congruent to $j$ modulo $m$ than parts congruent to $k$ modulo $m$, for $m\geq 2$. Then, Kim and Kim \cite{zbMATH07394382} proved that for all positive integers $n\geq m^2-m+1$, we have
\[
p_{1,0,m}(n)>p_{0,1,m}(n).
\]
Let us now denote by $q_{j,k,m}(n)$ the number of non-unitary partitions of $n$ such that there are more parts congruent to $j$ modulo $m$ than parts congruent to $k$ modulo $m$, for $m > 2$. Then, we have the following result.

\begin{theorem}\label{thm:kim-new}
For $n \geq 4m+3$ and $m\geq 2$, we have
\[
q_{0,1,m} (n) > q_{1,0,m} (n).
\]
\end{theorem}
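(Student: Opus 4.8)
The plan is to prove the statement analytically by isolating the two residue classes that matter and reducing everything to a positivity statement about a single two–variable generating function. Introduce a variable $z$ that marks the difference between the number of parts $\equiv 0$ and the number of parts $\equiv 1$ modulo $m$. Since every multiple of $m$ is already $\geq m\geq 2$, the admissible parts $\equiv 0\ (m)$ are exactly $\{m,2m,3m,\dots\}$, while the admissible parts $\equiv 1\ (m)$ are $\{m+1,2m+1,3m+1,\dots\}$ because the non-unitary restriction removes the part $1$. All other parts are inert, so one gets
\[
\sum_{n\ge 0}\sum_{d}N_d(n)\,z^{d}q^{n}=C(q)\,H(z;q),\qquad C(q)=\prod_{\substack{t\ge 2\\ t\not\equiv 0,1\ (m)}}\frac{1}{1-q^{t}},
\]
\[
H(z;q)=\prod_{k\ge 1}\frac{1}{1-z\,q^{mk}}\cdot\prod_{k\ge 1}\frac{1}{1-z^{-1}q^{mk+1}},
\]
where $N_d(n)$ counts non-unitary partitions of $n$ whose number of parts $\equiv 0$ exceeds its number of parts $\equiv 1$ by exactly $d$. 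Writing $h_d(q)=[z^{d}]H(z;q)$ and noting $[z^{-d}]H(z;q)=[z^{d}]H(z^{-1};q)=h_{-d}(q)$, one obtains the clean reduction
\[
\sum_{n\ge 0}\bigl(q_{0,1,m}(n)-q_{1,0,m}(n)\bigr)q^{n}=C(q)\sum_{d\ge 1}\bigl(h_d(q)-h_{-d}(q)\bigr).
\]
Since $C(q)$ has non-negative coefficients and constant term $1$, it suffices to prove that $\sum_{d\ge 1}(h_d-h_{-d})$ has non-negative coefficients, with strict positivity in degree $n$ for all $n\ge 4m+3$.

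The conceptual heart is the asymmetry $H(z^{-1};q)\neq H(z;q)$: the class $\equiv 0$ uses part sizes $mk$, whereas the class $\equiv 1$ uses the \emph{strictly larger} sizes $mk+1$. Thus an excess of class-$0$ parts is "cheaper" than an equal excess of class-$1$ parts, which should force $h_d(q)-h_{-d}(q)\succeq 0$ coefficientwise for every $d\ge 1$. To make this rigorous I would construct, for each fixed $d$, a weight-preserving injection from the configurations counted by $h_{-d}$ (i.e.\ $b$ parts $\equiv 1$, $a$ parts $\equiv 0$, $b-a=d$) into those counted by $h_d$. The natural move interchanges the two classes by the residue shift $mk\leftrightarrow mk+1$; this sends $a\mapsto b$ class-$0$ parts and $b\mapsto a$ class-$1$ parts, producing the required excess $d$ of class-$0$ parts, but it loses exactly $d$ in total weight (one unit from each of the $b-a=d$ surplus $+1$'s). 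The injection is then completed by reinserting that deficit of $d$ in a canonical, reversible fashion into the part sizes (adding the shortfall to designated parts while preserving all residue conditions and the lower bounds on part sizes), which is where the real work lies.

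I expect the main obstacle to be precisely this weight-correction step: one must reinsert the missing $d$ in a way that is simultaneously injective, stays inside the space of non-unitary partitions (never creating a part $1$ or dropping below the bound $m$ on class-$0$ parts), and admits a well-defined inverse so that distinct inputs give distinct outputs. Handling the strictness and the exact threshold is the second delicate point. For strict inequality one exhibits, for each $n\ge 4m+3$, an explicit partition counted by $q_{0,1,m}(n)$ that lies outside the image of the injection—for instance one whose relevant parts are all $\equiv 0\ (m)$, whose would-be preimage would require class-$1$ parts too small to be admissible—thereby showing non-surjectivity; the numerical bound $4m+3$ emerges as exactly the point at which such a witness first exists and persists, with the finitely many residual small cases $n<4m+3$ checked directly. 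Combining the coefficientwise inequality from the injection with the non-negativity of $C(q)$ and this strict witness yields $q_{0,1,m}(n)>q_{1,0,m}(n)$ for all $n\ge 4m+3$.
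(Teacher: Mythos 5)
Your reduction to the two-variable generating function $C(q)\,H(z;q)$ and the observation that the asymmetry comes from the shift $mk \leftrightarrow mk+1$ are both correct, and they capture the right conceptual reason for the bias. But the proof has a genuine gap at exactly the point you flag yourself: the injection from configurations with excess class-$1$ parts to those with excess class-$0$ parts is never constructed. The residue swap loses total weight $d$, and "reinserting that deficit of $d$ in a canonical, reversible fashion ... while preserving all residue conditions" is not a routine bookkeeping step — it is the entire difficulty. Adding $d$ to a part $\equiv 0 \pmod m$ changes its residue class unless $m \mid d$; distributing the $d$ units as $+1$'s onto class-$0$ parts turns them into class-$1$ parts and destroys the excess you just created; and dumping the deficit onto the inert parts can push an inert part into residue class $0$ or $1$, again corrupting the statistic. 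Until a specific map is written down and its injectivity verified, the claim $h_d - h_{-d} \succeq 0$ is unproved. The same applies to the threshold: the assertion that $4m+3$ "emerges as exactly the point at which such a witness first exists" is not backed by any computation, and the finitely many small cases are not actually checked.

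For comparison, the paper sidesteps the bijective difficulty entirely. It manipulates the generating functions with the $q$-analogue of Euler's transformation to arrive at the closed form
\begin{align*}
P_{0,1,m}(q) - P_{1,0,m}(q) = \frac{(q^m;q^m)_{\infty}}{(q^2;q)_{\infty}} \sum_{n\geq1}\frac{q^{mn^2}}{(q^m;q^m)^2_{n}}\,(1-q^n),
\end{align*}
then invokes Lemma 2.1 of Kim and Kim for the nonnegativity of the summands and isolates the $n=2$ summand to get strict positivity of the coefficient of $q^k$ for $k \geq 4m+3$. If you want to salvage your combinatorial route, the real task is to exhibit the weight-correcting injection explicitly (this is essentially the combinatorial content hiding inside the factor $1-q^n$ above, and it is known to be subtle — the published combinatorial proofs of the $m=2$ case in the Banerjee et al. paper require careful case analysis). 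As it stands, your argument is a plausible strategy, not a proof.
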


By standard combinatorial arguments, we have that $\dfrac{q^{bn}}{(q^2;q^2)_{n}}$ is the generating function for partitions with exactly $n$ odd parts with the minimum odd part being at least $b$, as well as it is the generating function for partitions with exactly $n$ even parts with the minimum even part being at least $b$. We will use this in the proofs below without commentary.

\begin{proof}[Proof of Theorem \ref{thm:mm}]

Let $P_o(q)$ (resp. $P_e(q)$) be the generating functions of $q_o(n)$ (resp. $q_e(n)$). The generating functions are not difficult to see. 

Observe that $\sum_{n\geq0} \dfrac{q^{3n}}{(q^2;q^2)^2_{n}}=\sum_{n\geq0} \dfrac{1}{(q^2;q^2)_{n}}\cdot\dfrac{q^{3n}}{(q^2;q^2)_{n}}$ is the generating function of partitions where the number of even parts is less than or equal to the number of odd parts where the minimum odd part is at least 3. Similarly, $\sum_{n\geq0} \dfrac{q^{5n}}{(q^2;q^2)^2_{n}}=\sum_{n\geq0} \dfrac{q^{2n}}{(q^2;q^2)_{n}}\cdot\dfrac{q^{3n}}{(q^2;q^2)_{n}}$ is the generating function of partitions where the number of even parts is equal to the number of odd parts where the minimum odd part is at least 3.
Thus, we have
\begin{align*}
P_{o}(q) & = \sum_{n\geq0} \frac{q^{3n}}{(q^2;q^2)^2_{n}} - \sum_{n\geq0} \frac{q^{5n}}{(q^2;q^2)^2_{n}} = q^3+q^5+q^6+q^7+2q^8+\cdots.
\end{align*}
Again, $\dfrac{1}{(q^2;q)_{\infty}}$ is the generating function of non-unitary partitions. If we subtract $\sum_{n\geq0} \dfrac{q^{3n}}{(q^2;q^2)^2_{n}} $ from $\dfrac{1}{(q^2;q)_{\infty}}$, we will be get the generating function of partitions with more even parts than odd parts where the minimum odd part is at least 3. Hence, we have
\begin{align*}
P_{e}(q) & = \dfrac{1}{(q^2;q)_{\infty}} - \sum_{n\geq0} \frac{q^{3n}}{(q^2;q^2)^2_{n}}= q^2+2q^4+3q^6+q^7+5q^8+\cdots.
\end{align*}

Substituting $c=q^4, a, b \to 0, z = q^3, q\to q^2$ in equation \eqref{euler} we get
\begin{align*}
P_{o}(q) 
& = \sum_{n\geq1} \frac{q^{3n}}{(q^2;q^2)^2_{n}}(1-q^{2n})\\ 
& = \frac{1}{(1-q^2)}\sum_{n\geq1} \frac{q^{3n}}{(q^4;q^2)_{n-1}(q^2;q^2)_{n-1}} = \frac{q^3}{(1-q^2)}\sum_{n\geq0} \frac{q^{3n}}{(q^4;q^2)_{n}(q^2;q^2)_{n}}\\
& = \frac{1}{(q^3;q^2)_{\infty}}\sum_{n\geq0} \frac{q^{2n^2+5n+3}}{(q^2;q^2)_{n+1}(q^2;q^2)_{n}} = \frac{1}{(q^3;q^2)_{\infty}}\sum_{n\geq1} \frac{q^{2n^2+n}}{(q^2;q^2)_{n}(q^2;q^2)_{n-1}}\\
& = \frac{1}{(q^3;q^2)_{\infty}}\sum_{n\geq1}\frac{q^{2n^2+n}}{(q^2;q^2)^2_{n}}(1-q^{2n}).
\end{align*}

Substituting $c=q^2, a, b \to 0, z = q^3, q\to q^2$ in equation \eqref{euler}, we get
\begin{align*}
P_{e}(q) & = \frac{1}{(q^3;q^2)_{\infty}}\frac{1}{(q^2;q^2)_{\infty}} - \sum_{n\geq0} \frac{q^{3n}}{(q^2;q^2)^2_{n}}\\
& = \frac{1}{(q^3;q^2)_{\infty}}\sum_{n\geq0}\frac{q^{2n^2}}{(q^2;q^2)^2_{n}} - \frac{1}{(q^3;q^2)_{\infty}}\sum_{n\geq0}\frac{q^{2n^2+3n}}{(q^2;q^2)^2_{n}} \quad(\text{thanks to \eqref{eq:And98 p.21}})\\
& = \frac{1}{(q^3;q^2)_{\infty}}\sum_{n\geq1}\frac{q^{2n^2}}{(q^2;q^2)^2_{n}}(1-q^{3n}).
\end{align*}
Now,
\begin{align}
P_{e}(q) - P_{o}(q) = \frac{1}{(q^3;q^2)_{\infty}}\sum_{n\geq1}\frac{q^{2n^2}}{(q^2;q^2)^2_{n}}(1-q^{n}).\label{p_e-p_o}
\end{align}

Clearly, for the summand from $n=2$ onward the coefficients are positive, because if $n$ is even, then $1-q^n$ will be canceled by a factor of $(q^2;q^2)_{n}$ and if $n$ is odd, then it will be canceled by a factor of $(q^3;q^2)_{\infty}$. 

From \cite[Eq. (3.4)]{kim2020parity}, we recall that
\begin{align*}
	\frac{1}{(q^3;q^2)_{\infty}}\frac{q(1-q)}{(1-q^2)^2} = -q^2-q^4+\frac{q(1+q^2)}{(1-q^2)}+\frac{q}{1-q^2}\sum_{n=2}^{\infty}\frac{(-q^2)_{n-1}}{(q^2)_{n-1}}(1+q^{2n+1})q^{\frac{3n^2+n}{2}}.
\end{align*}
Multiplying both sides of the above by $q$, we have
\begin{align}
\frac{1}{(q^3;q^2)_{\infty}}\frac{q^2(1-q)}{(1-q^2)^2} = -q^3-q^5+\frac{q^2(1+q^2)}{(1-q^2)}+\frac{q^2}{1-q^2}\sum_{n=2}^{\infty}\frac{(-q^2)_{n-1}}{(q^2)_{n-1}}(1+q^{2n+1})q^{\frac{3n^2+n}{2}},\label{posn1}
\end{align}
where the left side of \eqref{posn1} is the case $n=1$ in \eqref{p_e-p_o}.

We see that the coefficients for all terms are nonnegative except for $q^3$ and $q^5$. The terms of the expansion of the third summand of the RHS consist of terms of the form $q^{2i}$ for all $i\in \mathbb{N}$. For $n=2$, the fourth summand of the RHS gives a series where the terms are of the form  $q^{2i+1}$ for all $i\in \mathbb{N}$ and $i\geq4$. For all $n>2$, the minimum power of $q$ in the expansion of the fourth term of the RHS is greater than 9. Also, for all $n>1$, the minimum power of $q$ in the expansion of $P_{e}(q) - P_{o}(q)$ is greater than or equal to 8. So, in each case, the coefficient of $q^7$ is 0. This completes the proof.
\end{proof}

\begin{proof}[Proof of Theorem \ref{thm:kim-new}]
We start by acknowledging the fact that $\dfrac{q^{bn}}{(q^m;q^m)_{n}}$ is the generating function with partitions into $n$ parts congruent to $b \pmod m$. Let $P_{1,0,m}(q)$ (resp. $P_{ 0,1 m}(q)$) be the generating functions of $q_{1, 0, m}(n)$ (resp. $q_{ 0,1, m}(n)$). The reasoning for the generating functions are along similar lines as that of the above proof. However, we need to multiply $\dfrac{(q^{m+1},q^m;q^m)_{\infty}}{(q^2;q)_{\infty}}$ with $\sum_{n\geq0} \dfrac{q^{(m+1)n}}{(q^m;q^m)^2_{n}} -\sum_{n\geq0} \dfrac{q^{(m+1)n+mn}}{(q^m;q^m)^2_{n}}$, since this part represents the unrestricted parts as the bias is only on the parts of the form $0\pmod{m}$ and $1\pmod{m}$. Thus, we have
\begin{align*}
      P_{1,0,m}(q) = \frac{(q^{m+1},q^m;q^m)_{\infty}}{(q^2;q)_{\infty}} \sum_{n\geq0} \frac{q^{(m+1)n}}{(q^m;q^m)^2_{n}} -    \frac{(q^{m+1},q^m;q^m)_{\infty}}{(q^2;q)_{\infty}} \sum_{n\geq0} \frac{q^{(m+1)n+mn}}{(q^m;q^m)^2_{n}},
\end{align*}
and
\begin{align*}
    P_{0,1, m}(q) =\frac{1}{(q^2;q)_{\infty}} - \frac{(q^{m+1},q^m;q^m)_{\infty}}{(q^2;q)_{\infty}} \sum_{n\geq0} \frac{q^{(m+1)n}}{(q^m;q^m)^2_{n}}.
\end{align*}
Now, 
\begin{align}\label{eq:new-11}
P_{1,0,m}(q) 
\nonumber & = \frac{(q^{m+1},q^m;q^m)_{\infty}}{(q^2;q)_{\infty}} \sum_{n\geq0}\frac{q^{(m+1)n}}{(q^m;q^m)^2_{n}}(1-q^{mn})\\
\nonumber & = \frac{(q^{m+1},q^m;q^m)_{\infty}}{(q^2;q)_{\infty}} \sum_{n\geq1}\frac{q^{(m+1)n}}{(q^m;q^m)_{n}(q^m;q^m)_{n-1}}\\
\nonumber & = \frac{(q^{m+1},q^m;q^m)_{\infty}}{(q^2;q)_{\infty}}\frac{q^{m+1}}{(1-q^m)} \sum_{n\geq0}\frac{q^{(m+1)n}}{(q^m,q^{2m};q^m)_{n}}\\
\intertext{(by substituting, $q \to q^m, a,b \to 0, c \to q^{2m}$ and $z \to q^{m+1}$ in equation \eqref{euler}, we get)}
\nonumber& = \frac{(q^m;q^m)_{\infty}}{(q^2;q)_{\infty}}\frac{q^{m+1}}{(1-q^m)} \sum_{n\geq0}\frac{q^{mn^2+2mn+n}}{(q^m,q^{2m};q^m)_{n}}\\
& = \frac{(q^m;q^m)_{\infty}}{(q^2;q)_{\infty}} \sum_{n\geq1}\frac{q^{mn^2+n}(1-q^{mn})}{(q^m;q^m)^2_{n}}.
\end{align}

Similarly, we have
\begin{align}\label{eq:new-22}
P_{0, 1, m}(q) & = \nonumber \frac{(q^m;q^m)_{\infty}}{(q^2;q)_{\infty}}\sum_{n\geq0}\frac{q^{mn^2}}{(q^m;q^m)^2_{n}} - \frac{(q^m;q^m)_{\infty}}{(q^2;q)_{\infty}}\sum_{n\geq0} \frac{q^{mn^2+(m+1)n}}{(q^m;q^m)^2_{n}}\\ 
& = \frac{(q^m;q^m)_{\infty}}{(q^2;q)_{\infty}}\sum_{n\geq1}\frac{q^{mn^2}}{(q^m;q^m)^2_{n}}(1 - q^{(m+1)n}).
\end{align}
From equations \eqref{eq:new-11} and \eqref{eq:new-22}, we get
\begin{align*}
P_{0, 1, m}(q) - P_{1,0,m}(q) = \frac{(q^m;q^m)_{\infty}}{(q^2;q)_{\infty}} \sum_{n\geq1}\frac{q^{mn^2}}{(q^m;q^m)^2_{n}}(1-q^n).
\end{align*}
From Kim and Kim \cite[Lemma 2.1]{zbMATH07394382}, we see that the above difference has nonnegative coefficients for all $q^k$ with $k>2m+1$. The summand $n=2$ is $\dfrac{(q^m;q^m)_{\infty}q^{4m}}{(q^3;q)_{\infty}(q^m;q^m)^2_{2}}$. This shows that coefficients of $q^k$ are positive for $k \geq 4m+3$. In fact, the coefficient of $q^{4m}$ is also positive. So, we have our result.
\end{proof}

\section{Inequalities between Partitions with Parts Separated by Parity}\label{sec:pod}

Andrews \cite{andrews-1, andrews} studied partitions in which parts of a given parity are all smaller than those of the other parity, and proved several interesting results, which have been studied by other authors as well. We denote by $p_{yz}^{wx}(n)$ the cardinalities of the class of partitions of $n$ studied by Andrews. The symbols $wx$ and $yz$ are formed with the first letter either $e$ or $o$ (denoting even or odd parts) and the second letter either $u$ or $d$ (denoting unrestricted or distinct parts). The parts separated by the symbol in the subscript are assumed to lie below the parts represented by the superscript. This gives rise to eight different families of partitions, namely $p_{eu}^{ou}(n), p_{eu}^{od}(n), p^{eu}_{ou}(n), p^{ed}_{ou}(n), p^{ou}_{ed}(n), p^{od}_{ed}(n), p_{od}^{eu}(n)$ and $p_{od}^{ed}(n)$. The corresponding generating functions for the class of partitions counted by $p_{yx}^{wz}(n)$ are denoted by \[P_{yx}^{wz}(q):=\sum_{n\geq 0}p_{yx}^{wz}(n)q^n.\] The corresponding set of all partitions counted by $p_{yx}^{wz}(n)$ is denoted by $P_{yx}^{wz}(n)$. Collectively, we call all such partitions as partitions with parts separated by parity.

Recently, Ballantine and Welch \cite{ballantine2024combinatorial} proved the following inequalities for such partitions with parts separated by parity with some additional conditions:
\begin{align*}
    p_{ed}^{od}(n)&<p_{od}^{ed}(n), \text{for $n\geq11$,}\\
    p_{eu}^{ou}(n)&<p_{ou}^{eu}(n), \text{for $n\geq3$,}\\
    p_{od}^{eu}(n)&<p_{ed}^{ou}(n), \text{for $n\geq5$,}\\
    p_{eu}^{od}(n)&<p_{ou}^{ed}(n), \text{for $n\geq2$,}\\
    p_{od}^{ed}(n)&<p_{eu}^{od}(n), \text{for $n\geq8$.}
\end{align*}

In this section we mainly look at some inequalities between $p_{eu}^{ou}(n)$ and  $p_{ou}^{eu}(n)$. Unlike Ballantine and Welch \cite{ballantine2024combinatorial}, we do not put any additional conditions. We get the following two generating functions from Andrews \cite{andrews}.
\[
P_{eu}^{ou}(q) :=	\sum_{n\geq0} p_{eu}^{ou}(n)q^n =\frac{1}{(1-q)(q^2;q^2)_{\infty}},
\]
and
\[
P_{ou}^{eu}(q) :=	\sum_{n\geq0} p_{ou}^{eu}(n)q^n =\frac{1}{1-q}\bigg(\frac{1}{(q;q^2)_{\infty}}-\frac{1}{(q^2;q^2)_{\infty}}\bigg).
\]
Note that the set $P_{eu}^{ou}(n)$ includes the partitions with all parts even or odd. But $P_{ou}^{eu}(n)$ does not include the partitions with all parts even.

We now prove the following inequality between $p_{ou}^{eu}(n)$ and $p_{eu}^{ou}(n)$.
\begin{theorem}\label{Peu}
	For all $n>6$, we have
	\[p_{ou}^{eu}(n)> p_{eu}^{ou}(n).\]
\end{theorem}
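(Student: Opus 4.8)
The plan is to translate the claimed inequality into a statement about the coefficients of an explicit generating function and then reduce it, by partial summation, to an elementary inequality between the ordinary partition function and the distinct-part (equivalently odd-part) counting function. First I would form the difference of the two generating functions supplied by Andrews. Since $P_{eu}^{ou}(q) = \frac{1}{(1-q)(q^2;q^2)_{\infty}}$ and $P_{ou}^{eu}(q) = \frac{1}{1-q}\left(\frac{1}{(q;q^2)_{\infty}} - \frac{1}{(q^2;q^2)_{\infty}}\right)$, a one-line simplification gives
\[
P_{ou}^{eu}(q) - P_{eu}^{ou}(q) = \frac{1}{1-q}\left(\frac{1}{(q;q^2)_{\infty}} - \frac{2}{(q^2;q^2)_{\infty}}\right).
\]
Write $Q(n)$ for the number of partitions of $n$ into distinct parts, which by Euler's theorem equals the number of partitions of $n$ into odd parts, so that $\frac{1}{(q;q^2)_{\infty}} = \sum_{n\ge0} Q(n)q^n$; also $\frac{1}{(q^2;q^2)_{\infty}} = \sum_{j\ge0} p(j)\,q^{2j}$. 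The factor $\frac{1}{1-q}$ turns coefficient sequences into partial sums, so $p_{ou}^{eu}(n) - p_{eu}^{ou}(n) = A(n) - 2B(n) =: S(n)$, where $A(n) = \sum_{k=0}^n Q(k)$ and $B(n) = \sum_{0\le j\le n/2} p(j)$.

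Next I would exploit the parity structure, since $\frac{1}{(q^2;q^2)_{\infty}}$ contributes only to even powers. Because $2B(n)$ grows only at even $n$, the odd-indexed increments of $S$ are automatically positive: $S(2m+1) = S(2m) + Q(2m+1) > S(2m)$. For the even indices, grouping the $Q$-sum into consecutive pairs yields the clean identity
\[
S(2m) = -1 + \sum_{j=1}^{m} \Delta(j), \qquad \Delta(j) := Q(2j-1) + Q(2j) - 2p(j).
\]
Thus the whole theorem comes down to showing that each $\Delta(j)$ is nonnegative, together with a little bookkeeping of the first few terms.

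The crux, and the step I expect to be the main obstacle, is the pointwise inequality $\Delta(j) \ge 0$. I would prove the stronger and cleaner statement $Q(2j-1) \ge p(j)$; since $Q$ is nondecreasing this also gives $Q(2j) \ge p(j)$, whence $\Delta(j) \ge 0$. To obtain $Q(2j-1) \ge p(j)$ I would construct an explicit injection from partitions of $j$ into partitions of $2j-1$ into odd parts, for instance sending $\mu = (\mu_1,\dots,\mu_r)$ to the odd-part partition consisting of $2\mu_1-1,\dots,2\mu_r-1$ together with $r-1$ extra $1$'s; this has all parts odd and total $2j-1$, and injectivity follows because the total number of parts $2r-1$ recovers $r$, after which the $r-1$ padding $1$'s can be stripped off to recover $\mu$. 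Alternatively one can prove $Q(2j-1) \ge p(j)$ for large $j$ from the Hardy--Ramanujan asymptotics, since both sides share the exponential factor $e^{\pi\sqrt{2j/3}}$ while $Q(2j-1)$ carries the larger polynomial prefactor, and then verify the finitely many small cases by hand; the combinatorial injection is cleaner and uniform.

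Finally I would assemble the pieces. With $\Delta(j) \ge 0$ in hand and the easily checked values $\Delta(1) = \Delta(2) = 0$ and $\Delta(3) = \Delta(4) = 1$, the identity for $S(2m)$ gives $S(2m) \ge -1 + \Delta(3) + \Delta(4) = 1 > 0$ for all $m \ge 4$, while $S(6) = -1 + \Delta(3) = 0$. Combined with $S(2m+1) > S(2m)$ and the direct computation $S(7) = S(6) + Q(7) = 5$, this yields $S(n) > 0$ for every $n \ge 7$ and $S(6) = 0$, which is exactly the assertion $p_{ou}^{eu}(n) > p_{eu}^{ou}(n)$ for $n > 6$, the value $S(6)=0$ showing that the bound $n > 6$ is sharp.
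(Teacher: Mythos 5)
Your proof is correct, and after the common first step (forming $P_{ou}^{eu}(q)-P_{eu}^{ou}(q)=\frac{1}{1-q}\bigl(\frac{1}{(q;q^2)_{\infty}}-\frac{2}{(q^2;q^2)_{\infty}}\bigr)$) it diverges from the paper's argument in a genuine way. The paper rewrites the bracket as $\frac{1}{(q^2;q^2)_\infty}\bigl(\frac{(q^2;q^2)_\infty^2}{(q;q)_\infty}-2\bigr)$, invokes Gauss's identity $\frac{(q^2;q^2)_\infty^2}{(q;q)_\infty}=\sum_{n\geq0}q^{n(n+1)/2}$, and then analyzes the product of the triangular-number series with the coefficients $a_n$ of $\frac{1}{(1-q)(q^2;q^2)_\infty}$; its key lemma is $\sum_{i=0}^{n-2}b_{2i}>b_{2n}$ (a partial-sum inequality for the ordinary partition function), proved by an injection that removes the largest part, and it still has to check all even exponents below $14$ by hand. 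You instead read the two series directly as counting functions, let $\frac{1}{1-q}$ produce partial sums, and reduce everything to the single pointwise inequality $\Delta(j)=Q(2j-1)+Q(2j)-2p(j)\geq0$, which you get from the injection $\mu\mapsto(2\mu_1-1,\dots,2\mu_r-1,1^{r-1})$ proving $p(j)\leq Q(2j-1)$ together with the monotonicity of $Q$. I checked the details: the injection is well defined and injective (the part count $2r-1$ recovers $r$), the values $\Delta(1)=\Delta(2)=0$, $\Delta(3)=\Delta(4)=1$ are right, and the bookkeeping $S(6)=0$, $S(7)=5$, $S(2m)\geq1$ for $m\geq4$ closes the argument and even exhibits sharpness at $n=6$. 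Your route avoids the Gauss identity entirely, replaces the paper's residual finite verification up to $n=14$ with a check of only $\Delta(1),\dots,\Delta(4)$, and isolates a cleaner key lemma; the paper's version, on the other hand, fits the $q$-series style of the rest of the article and its lemma $\sum_{i=0}^{n-2}p(i)>p(n)$ is reused in the subsequent remark. Both are valid; yours is arguably the more self-contained and elementary of the two.
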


\begin{proof}
We have
	\begin{align*}
		P_{ou}^{eu}(q) - P_{eu}^{ou}(q) &=\frac{1}{1-q}\bigg(\frac{1}{(q;q^2)_{\infty}}-\frac{2}{(q^2;q^2)_{\infty}}\bigg)=\frac{1}{(1-q)(q^2;q^2)_\infty}  \bigg(\frac{(q^2;q^2)_\infty^2}{(q;q)_\infty}-2\bigg)\\
		&=\frac{1}{(1-q)(q^2;q^2)_\infty}  \bigg(\sum_{n\geq0}q^{\frac{n^2+n}{2}}-2\bigg),
	\end{align*}

 \noindent where the last equality follows from \cite[p. 23]{gea1}. 
 
 We now note that the products on the RHS can be rewritten as
 \[
(1+q+q^2+q^3+\cdots)\prod_{i=1}^\infty(1+q^{2i}+q^{4i}+q^{6i}+\cdots)(-1+q+q^3+q^6+q^{10}+q^{15}+\cdots).
\]
Let $(1+q+q^2+q^3+\cdots)\prod\limits_{i=1}^\infty(1+q^{2i}+q^{4i}+q^{6i}+\cdots)=\sum\limits_{n\geq 0}a_nq^n$. Then we can prove that
\[
a_{2n}=a_{2n+1}, \quad \text{for all}~n\geq 0,
\]
and the series begins as
\[
1+q+2q^2+2q^3+4q^4+4q^5+7q^6+7q^7+\cdots,
\]
where the coefficients of $q^n$ are clearly monotonically non-decreasing. Multiplying this with $(-1+q+q^3+q^6+q^{10}+q^{15}+\cdots)$ now shows that indeed the coefficients of $q^{2n+1}$ in $P_{ou}^{eu}(q) - P_{eu}^{ou}(q)$ are nonnegative for $n\geq1$ (since each instance of $a_{2n+1}q^{2n+1}$ multiplied with $-1$ will be cancelled out by $a_{2n}q^{2n}$ multiplied with $q$).

Let $\prod\limits_{i=1}^\infty(1+q^{2i}+q^{4i}+q^{6i}+\cdots)=\sum\limits_{n\geq 0}b_{2n}q^{2n}$, where $b_{2n}$ is the number of partitions of $2n$ with all parts even. To prove that the coefficients of $q^{2n}$ in $P_{ou}^{eu}(q) - P_{eu}^{ou}(q)$ are nonnegative for $n\geq 4$, we have to prove that
\[a_{2n-1}+a_{2n-3}>a_{2n},\]
which means
\[a_{2n-2}+a_{2n-3}>a_{2n}.\]
It is easy to see that
\[a_{2n}=\sum_{i=0}^{n}b_{2i}, \quad \text{and} \quad a_{2n-3}=\sum_{i=0}^{n-2}b_{2i}.\]
This implies,
\[a_{2n-2}+a_{2n-3}-a_{2n} = \sum_{i=0}^{n-2}b_{2i} - b_{2n}.\]
So, to complete the proof, it is enough to show that
\begin{equation}\label{eq:in}
    \sum\limits_{i=0}^{n-2}b_{2i} - b_{2n}>0.
\end{equation}

This is not difficult to see combinatorially. We define the set $\tilde{P}(2n)$ to be the set of partitions of $2n$ into even parts. Let $\tilde{A}(2n)=\tilde{P}(2n)\setminus\{(2n),(\underbrace{2,2,\ldots,2}_{n})\}$. Then we define an injection $\varphi: \tilde{A}(2n) \rightarrow \bigcup \limits_{i=1}^{n-2} \tilde{P}(2i)$ by mapping any partition $\lambda$ in $\tilde{A}(2n)$ to a partition in $\tilde{P}(2i)$ for $1\leq i\leq n-2$ by removing the largest part of $\lambda$. And we map $(2n)$ to $(2n-4)$, and $(\underbrace{2,2,\ldots,2}_{n})$ to $(2n-6)$, which is possible for all $n\geq7$. This proves the inequality \eqref{eq:in} for $n\geq 7$. So, the coefficients of even powers of $q$ in  $P_{ou}^{eu}(q) - P_{eu}^{ou}(q)$ are positive for all $n\geq 14$. Verifying for the smaller even powers of $q$, we get the theorem.
\end{proof}

\begin{remark}
In fact, it is possible to prove combinatorially that, for all $n\geq 7$, we have
\[
b_{2n-4}+b_{2n-6}+b_{2n-8}+b_{2n-10}>b_{2n}.
\]
This will give an alternate justification of the previous proof without invoking the map $\varphi$.
\end{remark}

We also look at non-unitary versions of these types of partitions. Let us denote by $Q_{eu}^{ou}(n)$ and $Q_{ou}^{eu}(n)$ the set of non-unitary partitions which are in the sets $P_{eu}^{ou}(n)$ and $P_{ou}^{eu}(n)$ respectively. Let us denote the cardinalities of these two sets by $q_{eu}^{ou}(n)$ and $q_{ou}^{eu}(n)$, respectively. If $1$ is a part in any partition inside $P_{eu}^{ou}(n)$, then no even part is there in that partition. So, we get the following generating function.
\[Q_{eu}^{ou}(q):= \sum_{n\geq0} q_{eu}^{ou}(n)q^n =\frac{1}{(1-q)(q^2;q^2)_{\infty}} - \frac{q}{(q;q^2)_{\infty}}.\]
If $1$ is not a part in any partition inside $P_{ou}^{eu}(n)$, then the least odd part of that partition is greater than or equal to $3$. So, in any case, the partition can not contain $2$ as a part. Therefore, we get the following generating function (for details see Andrews \cite{andrews}).
\begin{align*}
	Q_{ou}^{eu}(q):= \sum_{n\geq0} q_{ou}^{eu}(n)q^n &= \sum_{n\geq0}\frac{q^{2n+3}}{(q^3;q^2)_{n+1}(q^{2n+4};q^2)_{\infty}}\\
	&= \frac{q}{(q^2;q^2)_{\infty}}\bigg(\sum_{n\geq0}\frac{q^{2n}(q^2;q^2)_{n}}{(q^3;q^2)_{n}}-1\bigg)\\
	&=\frac{1}{(q;q^2)_{\infty}}-\frac{q+1}{(q^2;q^2)_{\infty}}.
\end{align*}

We now have the following result.
\begin{theorem}\label{Qeu}
	For all $n>3$, we have
	\[q_{ou}^{eu}(n)< q_{eu}^{ou}(n).\]
\end{theorem}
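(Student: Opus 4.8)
The plan is to form the difference $\Delta(q):=Q_{eu}^{ou}(q)-Q_{ou}^{eu}(q)$ of the two generating functions, simplify it, and prove that all of its coefficients from $q^4$ onward are strictly positive. Writing $E(q)=1/(q^2;q^2)_{\infty}$ and $O(q)=1/(q;q^2)_{\infty}$ for the generating functions of partitions into even, resp.\ odd, parts, the two displayed formulas collapse to
\[
\Delta(q)=\frac{1}{(1-q)(q^2;q^2)_{\infty}}+(1+q)E(q)-(1+q)O(q)=\frac{E(q)}{1-q}-(1+q)\bigl(O(q)-E(q)\bigr).
\]
A direct expansion gives $\Delta(q)=1+q^2+2q^4+q^5+3q^6+q^7+6q^8+\cdots$, with vanishing coefficients at $q^1$ and $q^3$; so the assertion is precisely that $[q^n]\Delta(q)>0$ for every $n\ge4$, and the whole difficulty is to establish this positivity.

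First I would read off the coefficients. Using the Euler expansions $O(q)=\sum_{n\ge0}q^n/(q^2;q^2)_n$ and $E(q)=\sum_{n\ge0}q^{2n}/(q^2;q^2)_n$, one finds $[q^n]\,E/(1-q)=\sum_{j=0}^{\lfloor n/2\rfloor}p(j)$, $[q^n]\,(1+q)E=p(\lfloor n/2\rfloor)$, and $[q^n]\,(1+q)O=c(n)+c(n-1)$, where $c(n)$ is the number of partitions of $n$ into odd parts. Thus Theorem \ref{Qeu} is equivalent to the elementary-looking inequality
\[
\sum_{j=0}^{\lfloor n/2\rfloor}p(j)+p\!\left(\lfloor n/2\rfloor\right)>c(n)+c(n-1)\qquad(n\ge4).
\]
A cleaner way to see the same statement is combinatorial: the non-unitary all-odd partitions lie in both classes and cancel, leaving $[q^n]\Delta=p_{\mathrm{ev}}(n)+M_1(n)-M_2(n)$, where $p_{\mathrm{ev}}(n)$ counts the partitions of $n$ into even parts, and $M_1(n)$ (resp.\ $M_2(n)$) counts the partitions of $n$ using both parities with every even part smaller than every odd part (resp.\ every odd part smaller than every even part).

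It then suffices to produce an injection $M_2(n)\hookrightarrow M_1(n)$: for even $n$ the surplus $p_{\mathrm{ev}}(n)\ge1$ forces strictness, and for odd $n$ (where $p_{\mathrm{ev}}(n)=0$) one checks the injection misses at least one element. The natural mechanism is to transfer mass in even amounts off the (large) even parts of an $M_2$-partition onto its (small) odd parts, turning the evens into the smaller block and the odds into the larger one while preserving each part's parity; the one-block case $\{o,e\}\mapsto\{2,\,o+e-2\}$ illustrates the idea.

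The hard part will be making this injection canonical and genuinely one-to-one: distinct $M_2$-partitions of the same size (for instance $\{3,8\}$ and $\{5,6\}$ at $n=11$) must be routed to distinct $M_1$-partitions, so the naive ``dump all excess onto one odd part'' rule collapses and a multiplicity-aware transfer is required. If a clean bijection proves elusive, the fallback—matching the ``nonnegative series plus finite check'' template already used for Theorems \ref{thm:mm} and \ref{thm:kim-new}—is to prove the displayed count inequality for all large $n$ from the Hardy--Ramanujan asymptotics (here $c(n)\sim\bigl(4\cdot3^{1/4}n^{3/4}\bigr)^{-1}e^{\pi\sqrt{n/3}}$, while $\sum_{j\le n/2}p(j)\sim\pi^{-1}\sqrt{3n}\,p(\lfloor n/2\rfloor)$ is larger by a factor $\asymp n^{1/4}$) and then to verify the finitely many small $n$ directly. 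The genuinely delicate point, in either route, is that the two sides share the exponential order $e^{\pi\sqrt{n/3}}$, leaving only this polynomial margin, so no term-by-term domination can be expected.
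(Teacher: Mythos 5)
Your setup is correct: the difference $\Delta(q)=Q_{eu}^{ou}(q)-Q_{ou}^{eu}(q)=\frac{E(q)}{1-q}+(1+q)E(q)-(1+q)O(q)$ matches the paper's $\frac{2-q^2}{1-q}\cdot\frac{1}{(q^2;q^2)_\infty}-\frac{1+q}{(q;q^2)_\infty}$, your coefficient extraction is right, and the reduction to $\sum_{j=0}^{\lfloor n/2\rfloor}p(j)+p(\lfloor n/2\rfloor)>c(n)+c(n-1)$ is a legitimate equivalent form of the theorem. But the proof stops there. Neither of your two routes is carried out: you explicitly concede that the injection $M_2(n)\hookrightarrow M_1(n)$ is not constructed and that the naive transfer rule fails to be injective, and the asymptotic fallback is only a heuristic --- since, as you note yourself, both sides share the exponential order $e^{\pi\sqrt{n/3}}$ and differ only by a factor $\asymp n^{1/4}$, turning Hardy--Ramanujan asymptotics into a proof would require explicit error bounds sharp enough to locate the crossover, plus a finite verification, none of which is supplied. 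As written, this is a correct reformulation of the problem rather than a proof of it.

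The ingredient you are missing is the one the paper uses to sidestep exactly this ``shared exponential order'' difficulty: applying Heine's transformation \eqref{heine} with $a,b\to0$, $c=-q$ and $z=q$ (resp.\ $z=q^2$) gives
\[
\frac{1}{(q;q^2)_{\infty}}=\frac{1}{(q^2;q^2)_\infty}\sum_{n\geq0}q^{\frac{n(n+1)}{2}},
\qquad
\frac{1}{(q^2;q^2)_{\infty}}=\frac{1}{(q^2;q^2)_\infty}\sum_{n\geq0}(1-q^{n+1})q^{\frac{n(n+1)}{2}},
\]
so that both $O(q)$ and $E(q)$ acquire the common factor $\frac{1}{(q^2;q^2)_\infty}$ and the difference collapses termwise over triangular numbers. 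One then computes
\[
\Delta(q)=\frac{1}{(q^2;q^2)_\infty}\sum_{n\geq0}\Bigl(\tfrac{(2-q^2)(1-q^{n+1})}{1-q}-(1+q)\Bigr)q^{\frac{n(n+1)}{2}}
=\frac{1}{(q^2;q^2)_\infty}\Bigl(1+\sum_{n\geq1}(q^2+\cdots+q^{n+1})q^{\frac{(n+1)(n+2)}{2}}\Bigr),
\]
which is manifestly a product of series with nonnegative coefficients, and positivity of $[q^n]\Delta$ for all $n>3$ follows at once (the constant term $1$ handles even exponents, the $q^5$ term handles odd exponents $\geq5$). If you want to rescue your combinatorial route instead, the inequality $\sum_{j\le\lfloor n/2\rfloor}p(j)+p(\lfloor n/2\rfloor)>c(n)+c(n-1)$ would need an actual injection or an effective analytic bound; until one is produced, the argument has a genuine gap.
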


\begin{proof} We have
\begin{align*}
	Q_{eu}^{ou}(q) - Q_{ou}^{eu}(q)
	&=\frac{2-q^2}{1-q} \cdot \frac{1}{(q^2;q^2)_{\infty}} - \frac{1+q}{(q;q^2)_{\infty}}\\
 	&=\frac{1}{(q^2;q^2)_\infty} \sum_{n\geq0} \bigg(\frac{ (2-q^2)(1-q^{n+1})}{1-q}-(1+q)\bigg)q^{\frac{n^2+n}{2}}\quad(\text{thanks to \eqref{eq:euler-2}}, \eqref{eq:euler-3}, \eqref{eq:heine-1}, \eqref{eq:heine-2})\\
 	&=\frac{1}{(q^2;q^2)_\infty} \bigg(\sum_{n\geq0}(1+q+q^2+\cdots+q^n)q^{\frac{n(n+1)}{2}} -\sum_{n\geq0}(1+q)q^{\frac{(n+1)(n+2)}{2}}\bigg)\\
 	&=\frac{1}{(q^2;q^2)_\infty} \bigg(1+\sum_{n\geq0}(1+q+q^2+\cdots+q^{n+1})q^{\frac{(n+1)(n+2)}{2}} -\sum_{n\geq0}(1+q)q^{\frac{(n+1)(n+2)}{2}}\bigg)\\
 	&=\frac{1}{(q^2;q^2)_\infty} \bigg(1+\sum_{n\geq1}(q^2+\cdots+q^{n+1})q^{\frac{(n+1)(n+2)}{2}}\bigg).
\end{align*}
Hence, the coefficients of $q^n$ in $Q_{eu}^{ou}(q) - Q_{ou}^{eu}(q)$ are positive for all $n>3$.
\end{proof}

\section{Concluding Remarks}\label{sec:conc}

There are several natural questions that arise from our study, including several avenues for further research. We list below a selection of such questions and comments.

\begin{enumerate}
\item Experiments suggest that the inequality in Theorem \ref{thm:mm} can be strengthened. We conjecture that, for all $n>9$, we have
\[
3q_o(n)<2q_e(n).
\]
In fact, it is easy to see that this is true for all even $n$, since we have
\begin{align*}
2P_{e}(q) - 3P_{o}(q)
& = \frac{1}{(q^3;q^2)_{\infty}}\sum_{n\geq1}\frac{q^{2n^2}}{(q^2;q^2)^2_{n}}(1-q^{n})^2(2+q^n),
\end{align*}
and when $n$ is even then $(1-q^{n})^2$ is canceled by a factor of $(q^2;q^2)_n^2$.
\item Chern \cite[Theorem 1.3]{zbMATH07481756} has recently proved for $m\geq 2$ and for integers $a$ and $b$ such that $1\leq a<b\leq m$, we have
\[
p_{a,b,m}(n)\geq p_{b,a,m}(n),
\]
thus generalizing the result of Kim and Kim \cite{zbMATH07394382}. Limited data suggest that this inequality is reversed if we consider $q_{j,k,m}(n)$ instead of $p_{j,k,m}(n)$. It would be interesting to obtain a unified proof of this observation.
\item Kim, Kim, and Lovejoy \cite{zbMATH07394382} and Kim and Kim \cite{zbMATH07394382} also study asymptotics of some of their parity biases. It would be interesting to study such asymptotics for our cases as well.
\item All the proofs in this paper are analytical. It would be interesting to obtain combinatorial proofs of some of these results.
\item Analytical proofs of the inequalities \eqref{eq:22} and \eqref{eq:33} would also be of interest to see if we can obtain more generalized results of a similar flavor.
\item Alanazi and Nyirenda \cite{AlanaziNyirenda} and Chern \cite{Chern2} study some more classes of partitions where the parts are separated by parity, following the work of Andrews \cite{andrews}. It would be interesting to see if inequalities of the type proved in Theorems \ref{Peu} and \ref{Qeu} can be proved for these cases as well as for other classes studied by Andrews \cite{andrews}.
\end{enumerate}

\section*{Acknowledgements}
The authors thank Professor Nayandeep Deka Baruah (Tezpur) for helpful comments and encouragement. This work was carried out when the third author was a PhD student at Tezpur University where he was supported by an institutional fellowship for doctoral research. The authors thank the anonymous reviewer for helpful comments, which improved the manuscript.

\section*{Declarations}
\textbf{Conflict of interest:} The authors declare that they have no conflict of interest.

\newcommand{\etalchar}[1]{$^{#1}$}

\end{document}